\documentclass{amsart}
\usepackage{graphics}
\usepackage{amsfonts,amsmath}
\usepackage{amsmath, amsfonts,amssymb,amsopn,amscd,amsthm}
 \usepackage{hyperref}
 \usepackage{mathrsfs}
 \hypersetup{colorlinks=true,linkcolor=blue,citecolor=blue,linktoc=page}

\begin{document}

 \newtheorem{thm}{Theorem}[section]
 \newtheorem{cor}[thm]{Corollary}
 \newtheorem{lem}[thm]{Lemma}{\rm}
 \newtheorem{prop}[thm]{Proposition}

 \newtheorem{defn}[thm]{Definition}{\rm}
 \newtheorem{assumption}[thm]{Assumption}
 \newtheorem{rem}[thm]{Remark}
 \newtheorem{ex}{Example}
\numberwithin{equation}{section}

\def\e{{\rm e}}
\def\x{\boldsymbol{x}}
\def\c{\boldsymbol{c}}
\def\b{\boldsymbol{b}}
\def\blambda{\boldsymbol{\lambda}}
\def\P{\mathbf{P}}
\def\I{\mathbf{I}}
\def\Q{\mathbf{Q}}
\def\B{\mathbf{B}}
\def\N{\mathbb{N}}
\def\by{\mathbf{y}}
\def\z{\boldsymbol{z}}
\def\R{\mathbb{R}}
\def\A{\mathbf{A}}
\def\U{\mathbf{U}}
\def\C{\mathbf{C}}
\def\Tr{\text{Tr}}
\def\G{\mathbf{G}}
\def\X{\mathbf{X}}
\def\L{\mathbf{L}}
\def\f{\mathbf{f}}
\def\y{\boldsymbol{y}}
\def\bv{\varepsilon}
\def\bI{\mathbf{I}}
\def\y{\mathbf{y}}
\def\g{\mathbf{g}}
\def\w{\mathbf{w}}
\def\b{\mathbf{b}}
\def\a{\mathbf{a}}
\def\u{\mathbf{u}}
\def\q{\mathbf{q}}
\def\e{\mathbf{e}}
\def\s{\mathcal{S}}

\def\om{\mathbf{\Omega}}
\def\blue{\color{blue}}
\def\red{\color{red}}
\def\blambda{\boldsymbol{\lambda}}
\def\btheta{\boldsymbol{\theta}}
\def\balpha{\boldsymbol{\alpha}}
\def\dis{\displaystyle}
\def\1{\boldsymbol{1}}
\def\va{\vert\balpha\vert}
\title[Entropy-regularization of LP and SDP]
{Shannon- and Von Neumann-entropy regularizations of Linear and Semidefinite Programs}
\author{S.P. Chhatoi \and JB. Lasserre}
\address{LAAS-CNRS, 7 avenue du Colonel Roche\\
31077 Toulouse C\'edex 4, France}
\email{spchhtoi@laas.fr}
\address{LAAS-CNRS and Toulouse School of Economics (TSE)\\
LAAS, 7 avenue du Colonel Roche\\
31077 Toulouse C\'edex 4, France}
\email{lasserre@laas.fr}

\date{}

\maketitle
\begin{abstract}
We consider the LP in standard form $\min\,\{\c^T\x: \A\x=\b;\x\geq0\}$
and inspired by $\bv$-entropy regularization in Optimal Transport, 
we introduce its $\varepsilon$-regularization 
$\min\,\{\c^T\x+\varepsilon\,f(\x): \A\x=\b;\x\geq0\}$ 
via the (convex) Boltzmann-Shannon entropy $f(\x):=\sum_i\x_i\ln\x_i$. We also provide 
a similar regularization for the 
semidefinite program $\min\,\{\Tr(\C\cdot\X): \mathcal{A}(\X)=\b;\X\succeq0\}$ 
but with now the so-called Von Neumann entropy, as in Quantum Optimal Transport. Importantly, both entropies are not barriers of the LP and SDP cones respectively.
We show that for both LP and SDP, the resulting regularized problem admits an equivalent unconstrained convex problem $\max_{\blambda\in\R^m} G_\varepsilon(\blambda)$ for an explicit 
concave differentiable function $G_\varepsilon$ 
in dual variables $\blambda\in\R^m$. As $\varepsilon$ goes to zero, its optimal value converges to the optimal value of the initial LP. In the limit, one also recovers the unique optimal solution of the LP that minimizes the Shanon entropy among all LP-minimizers. 
While it resembles the log-barrier formulation of interior point algorithms, 
it has a distinguishing 
advantage. Namely, $G_\varepsilon(\blambda)$ is obtained in closed form (via an explicit minimizer $\x(\blambda)>0$) for every arbitrary $\blambda\in\R^m$, hence with \emph{no} need to restrict $\blambda$ to the feasible set of the dual LP.
This explicit form of $G_\varepsilon$ is crucial for its maximization over the whole $\R^m$. 
\end{abstract}

\section{Introduction}

In this paper we introduce the Boltzmann-Shannon 
entropy-regularization of linear programs (LPs) and the von Neumann entropy-regularization of semidefinite programs (SDPs), and describe and comment some of their nice features. The main remarkable feature is that with $\varepsilon>0$ fixed, arbitrary, in both LP and SDP cases, one has to solve an unconstrained convex optimization problem whose reward function is explicit, strictly concave and differentiable. Hence basic optimization algorithms can be applied. 

Namely, Let $\P$ and $\Q$ be a linear program (LP) 
and $\Q$ be a semidefinite program (SDP) in standard form:
\begin{eqnarray}
    \label{def-LP}
 \P:\quad   \tau&=&\min\,\{\,\c^T\x: \A\x=\b\,;\:\x\geq0\,\}\\
 \label{def-SDP}
    \Q:\quad \rho&=&\min\,\{\,\Tr(\C\cdot\X) : \mathcal{A}(\X)\,=\b\,;\:\X\succeq0\,\},
    \end{eqnarray}
where $\c\in\R^d$, $\C\in\mathcal{S}^n$, $\b\in\R^m$, $\A\in\R^{m\times d}$, and $\mathcal{A}:\mathcal{S}^n\to \R^m$ is a linear map (with $\mathcal{S}^n$ being the space of real $(n\times n)$ symmetric matrices).

Their respective $\varepsilon$-regularization $\P_\varepsilon$ and $\Q_\varepsilon$ (for $\varepsilon>0$) via Shannon- and von Neumann-entropy\footnote{Observe that $\Tr(\ln(\X))$ is the analogue for positive semidefinite matrices of $\sum_i\ln(\x_i)$ for $\x\geq0$. Indeed $\Tr(\ln(\X))=\sum_i\ln(\sigma_i)$ where 
$(\sigma_i)_i$ are the eigenvalues of $\X$.}, read:
\begin{eqnarray}
    \label{def-LP-epsilon}
   \P_\varepsilon:\: \tau_\varepsilon&=&
   \min_{\x}\,\{\,\c^T\x+\varepsilon\,\sum_{i=1}^d\x_i\,\ln (\x_i): \A\x=\b\,;\:\x\geq0\,\}\\
   \label{def-SDP-epsilon}
    \Q_\varepsilon:\:\rho_\varepsilon&=&\min_{\X}\,\{\,\Tr(\C\cdot\X)
    +\varepsilon \,\Tr(\X\cdot\ln(\X)): \mathcal{A}(\X)\,=\b\,;\:\X\succeq0\,\}\,.
    \end{eqnarray}
    
Notice that $\P_\varepsilon$ resembles the log-barrier 
method 
\begin{equation}
\label{def-logbarrier-LP}
    \min_{\x}\,\{\,\c^T\x-\varepsilon\,\sum_{i=1}^d\ln(\x_i): \A\x=\b\,;\:\x\geq0\,\}\,\end{equation}
to solve \eqref{def-LP} via interior point methods,
and iterating with a sequence of parameters 
$\varepsilon_k\downarrow 0$ as $k\to\infty$.
Similarly,
the log-barrier method for solving \eqref{def-SDP} reads
\begin{equation}
    \label{def-logbarrier-SDP}
    \min_{\X}\,\{\,\Tr(\C\cdot\X)
    -\varepsilon \,\ln\mathrm{det}(\X): \mathcal{A}(\X)\,=\b\,;\:\X\succeq0\,\}\,.\end{equation}
In fact, the function $-\sum_i\ln(\x_i)$ is also an entropy function initially used by Boltzmann. However in the context of
LP and SDP, and to the best of our knowledge, in its initial version, the log-barrier was not referred to as an entropy regularization. It was more conceived as a ``repulsive barrier" to prevent $\x$ (resp. $\X$) from approaching the boundary of the convex cone $\R^d_+$ (resp. $\mathcal{S}^n_+$). The emphasis was on its self-concordance property of utmost importance for efficiency of interior point methods. More recently, an \emph{entropic} variant was proposed in \cite{bubek} where its properties are related to that of log-concave densities in probability. In the discussion in \cite[Section 2.1]{bubek}
Hildebrand's canonical barrier (a variant) is also related to the differential entropy of a Gaussian probability measure.\\

But we emphasize that in contrast with the log-barrier, the Boltzmann-Shannon entropy $\x\mapsto \sum_i\x_i\ln(\x_i)$ is \emph{not} a barrier for the positive orthant because $u\ln(u)\to 0$ whenever $u\to 0$. This is why the term
$\varepsilon$-regularization is 
more appropriate. Indeed without the entropy (i.e., $\varepsilon=0$), an optimal solution $\x^*(0)\in\R^d$ of the LP is a vertex of the polytope hence with at least $d-m$ zero coordinates. With $\varepsilon>0$ the associated optimal solution $\x^*(\varepsilon)$ has \emph{no} zero coordinate
and is a regularization of the vertex to a positive vector 
with small coordinates $\x^*_i(\varepsilon)$ if
$\x^*(0)=0$ and $\varepsilon>0$ is small. Similarly for SDPs,
$\X^*(0)$ is not full rank whereas $\X^*(\varepsilon)$
is  full rank, but with some very small eigenvalues if $\varepsilon>0$ is small.

Concerning SDPs, note that the classical log-barrier $-\ln(\mathrm{det}(\X))$ is also not comparable with the Von Neumann entropy $\Tr(\X\cdot\ln(\X))$ in \eqref{def-SDP-epsilon}, as again and as in the scalar case, the latter is \emph{not} a barrier for the SDP cone.

Depending on the context (thermodynamics, information theory, probability, etc..) the entropy $\sum_i\x_i\ln\x_i$ is given different names, like e.g., Boltzmann, Gibbs, Shannon). 
For an account on and historical references to entropy,
the interested reader is referred to e.g., \cite{gerolin-1,gerolin-2,Tryphon,Villani} and the many references therein.

To the best of our knowledge, we are not aware of prior works using the Boltzmann-Shannon entropy to solve or approximate 
general LPs or SDPs (possibly because they are not \emph{barriers} of the respective LP and SDP cones). An exception is the LP formulation of (commutative) optimal transport and the SDP formulation of non-commutative
quantum optimal transport. The former (resp. the latter) is a particular case of \eqref{def-LP-epsilon} (resp. 
\eqref{def-SDP-epsilon}) and resulted in the celebrated Sinkhorn algorithm. 
For more details on the topic of optimal transport and entropy, the interested reader is referred to e.g. \cite{brenier,beier,Cole,Cuturi,Tryphon,gerolin-1,gerolin-2,Villani}. It is worth noting that the LP formulation 
of optimal transport can be solved by Kuhn's Hungarian method
as a more efficient alternative to the simplex algorithm.

\subsection*{Contribution}
Assume that the feasible set of 
\eqref{def-LP} (resp. \eqref{def-SDP}) is compact and Slater's condition holds for 
\eqref{def-LP} (resp. \eqref{def-SDP}).
Then we prove that for fixed $\varepsilon>0$:

(i) Solving $\P_\varepsilon$ reduces to solving the unconstrained convex optimization problem
\begin{equation}
\label{final-LP}
    \max_{\blambda\in\R^m}\b^T\blambda-\frac{\varepsilon}{e}\sum_{i=1}^d \exp((\A^T\blambda-\c)_i/\varepsilon)\quad(=:\,\max_{\blambda} 
 G_\varepsilon(\blambda))\,,
    \end{equation}
where $G_\varepsilon$ is concave, differentiable, and attains its maximum at some $\blambda^*\in\R^m$ because $-G_\varepsilon$ is coercive.

(ii) Solving $\Q_\varepsilon$ reduces to solving the unconstrained convex optimization problem
\begin{equation}
\label{final-SDP}
    \max_{\blambda\in\R^m}
    \b^T\blambda-\varepsilon\,\Tr(\exp(\mathcal{A}^*\blambda-\C)/\varepsilon)-\bf I)\quad (=:\max_{\blambda\in\R^m} \hat{G}_\varepsilon(\blambda))\,,
    \end{equation}
where $\hat{G}_\varepsilon$ is concave, differentiable, and attains its maximum at some $\blambda^*\in\R^m$, also because $\hat{G}_\varepsilon$ is coercive.\\

(iii) We also provide asymptotics as $\varepsilon\downarrow0$. Namely,
$\tau_\varepsilon\to\tau$ as $\varepsilon\downarrow 0$, and therefore with $\varepsilon>0$ fixed, sufficiently small,
one obtains a close approximation of $\tau$
by solving the single unconstrained convex optimization 
\eqref{final-LP}. 
Similarly, $\rho_\varepsilon\to\rho$ as $\varepsilon\downarrow 0$, and therefore with $\varepsilon>0$ fixed, sufficiently small, one obtains a close approximation of $\rho$
by solving the single unconstrained convex optimization 
\eqref{final-SDP}.

In addition and interestingly, consider a sequence $\x^*(\varepsilon_k)_{k\in\N}$ of optimal solutions of 
$\P_{\varepsilon_k}$, with $\varepsilon_k\downarrow0$ as $k\to\infty$. Then the sequence converges to the unique optimal solution $\x^*$ of $\P$ which minimizes 
the Boltzmann-Shannon entropy among the set of all optimal solutions of $\P$. The same result also holds for a sequence $\X^*(\varepsilon_k)_{k\in\N}$ of optimal solutions of $\Q_{\varepsilon_k}$.\\

(iv) For illustration purposes and also as a proof of concept, we provide 
a (limited) set of numerical experiments on LPs and SDPs of non-trivial size.
For SDPs note that even though \eqref{final-SDP} is an unconstrained problem, evaluation of $\exp((\mathcal{A}^*\blambda-\C)/\varepsilon)$ for a given $\blambda\in\R^m$ requires a singular value decomposition 
$\U^T\L\U$ of $\mathcal{A}^*\blambda-\C$ to exponentiate 
the diagonal elements of $\L$, and therefore as expected, 
maximizing $\hat{G}_\varepsilon$ in \eqref{final-SDP} 
is significantly more difficult\footnote{Notice that in the $\log$-barrier method one has to evaluate 
$\ln(\mathrm{det}(\X))$, a non-trivial task, let alone 
that $\X$ should be maintained to be positive definite.} than maximizing $G_\varepsilon$ in \eqref{final-LP}. 
\vspace{.2cm}

\subsection*{Remarks and discussion}~
\vspace{0.2cm}

$\bullet$ Comparing \eqref{def-LP-epsilon} with \eqref{def-logbarrier-LP}: To solve \eqref{def-logbarrier-LP} one needs
to maintain the semidefinite constraint $\x>0$, that is, 
\eqref{def-logbarrier-LP} is \emph{not} an unconstrained minimization problem. In contrast, \eqref{final-LP}
is the Lagrangian relaxation-scheme applied to $\P_\varepsilon$ and thanks to 
the particular form of the Shannon entropy, 
the unconstrained inner minimization
\[\min_{\x\in\R^d} \c^T\x+\blambda^T(\b-\A\x)+\varepsilon\,f(\x)
\quad ( =\,G_\varepsilon(\blambda)-\b^T\blambda)\]
(where $f(\x):=\sum_{i=1}^d \x_i\ln(\x_i)$ if $\x\geq0$ and 
$+\infty$ otherwise) is obtained in closed form with a strictly positive 
minimizer $\x(\varepsilon)$, which yields the concave differentiable function $G_\varepsilon$
in explicit form \eqref{final-LP}.

The Shannon-entropy has been used in $\varepsilon$-regularization of (static) optimal transport (OT), and the characterization of optimal solutions,
has resulted in the Sinkhorn algorithm widely used (e.g., in Machine Learning applications of OT). For more details on
OT and the Sinkhorn algorithm the interested reader is referred to e.g. \cite{Tryphon,Cuturi} and references therein.
However to the best of our knowledge we are not aware of the use of Boltzmann-Shannon entropy to help solve LPs.
\vspace{0.2cm}

$\bullet$  Comparing \eqref{def-SDP-epsilon} with \eqref{def-logbarrier-SDP}: To solve \eqref{def-logbarrier-SDP} with interior point methods, one needs
to maintain the semidefinite constraint $\X\succ0$, that is, 
\eqref{def-logbarrier-SDP} is \emph{not} an unconstrained minimization problem. In contrast, \eqref{final-SDP}
is the Lagrangian relaxation-scheme applied to $\Q_\varepsilon$ and thanks to 
the particular form of the Shannon entropy, 
the unconstrained inner minimization
\[\min_{\X\in\mathcal{S}^n} \Tr(\C\cdot\X)+\blambda\,(\b-\mathcal{A}(\X))+\varepsilon\,g(\X)
\quad (=\,\hat{G}_\varepsilon(\blambda)-\b^T\lambda)\]
where $g(\X):=\Tr(\X\cdot\ln(\X))$ if $\X\succ0$,
$0$ if $\X\succeq0$ and $\X$ is singular, and $+\infty$ otherwise)
is obtained in closed form with a strictly positive 
definite minimizer $\X(\varepsilon)$, and yields the function concave differentiable function $\hat{G}_\varepsilon$
in explicit form \eqref{final-SDP}.\\

In fact the entropy function $\X\mapsto \Tr(\X\cdot\ln(\X))
$ is rather called the Von Neumann entropy, and is used 
in some Quantum Optimal Transport problems formulated 
as (complex) SDPs with Hermitian matrices of very large dimension. So while inspired from this approach in Quantum Optimal Transport, our formulation $\Q_\varepsilon$ for general SDPs \eqref{def-SDP} turns out to be in a much more favorable situation, at least for reasonable dimensions.

\section{Main result}
Recall that $\lim_{u\downarrow 0}u\,\ln(u)=0$ and so
by continuity at $0$, consider the convex function
 \[\x\mapsto f(\x)\,:=\,\left\{\begin{array}{rl}\sum_{i=1}^d \x_i\log(\x_i)&\mbox{if $\x\geq0$}\\
 +\infty&\mbox{otherwise,}\end{array}\right.,\]
(with the convention $0\ln(0)=0$) which is convex and continuous on its domain $\mathrm{Dom}(f)\,(=\{\x:\x\geq0\})$,
and differentiable on $\mathrm{int}(\mathrm{Dom}(f))=\{\x>0\}$.

\subsection{Entropy-regularization of LPs}

Consider the linear program (LP) \eqref{def-LP} and its
$\varepsilon$-regularization \eqref{def-LP-epsilon}
where $\c\in\R^d$, $\b\in\R^m$,  and $\A\in\R^{m\times d}$.
For fixed $\varepsilon>0$ define the Lagrangian
\[(\x,\blambda)\mapsto \mathcal{L}_\varepsilon(\x,\blambda)\,:=\,
\c^T\x+\blambda^T(\b-\A\x)+\varepsilon\,f(\x)\,,\quad (\x,\blambda)\in \R^d\times\R^m\,,\]
associated with $\P_\varepsilon$, and let
\[\blambda\mapsto G_\varepsilon(\blambda)\,:=\,\inf_{\x\in\R^d}\mathcal{L}_\varepsilon(\blambda,\x)\,,\quad \blambda\in\R^d\,.\]
\begin{lem}
\label{lem-1}
    Assume that there exists $\blambda_0\in\R^m$ such that
    $\A^T\blambda_0>0$  (so that $\om:=\{\,\x: \A\x=\b;\,\x\geq0\,\}$ is compact).
    Then for every fixed $\varepsilon>0$, 
    $G_\varepsilon$ is concave, differentiable and satisfies
    \begin{eqnarray}
        \label{lem-1:1}
G_\varepsilon(\blambda)&=&
\mathcal{L}_\varepsilon(\blambda,\x^*(\blambda))\,=\,
\b^T\blambda-\varepsilon\,\sum_{i=1}^d\x^*_i(\blambda)\,,
\quad\forall\blambda\in\R^m\\
        \label{lem-1:2}
        \mbox{with $\x^*_i(\blambda)$}&:=&\exp((\A^T\blambda-\c)_i/\varepsilon -1)\,>\,0\,,\quad i=1,\ldots,d\,;\\
        \label{lem-1:3}
        \nabla G_\varepsilon(\blambda)&=&\b-\A\x^*(\blambda)\,,\quad\forall\blambda\in\R^m\,.
    \end{eqnarray}
 \end{lem}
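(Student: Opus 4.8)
The plan is to exploit that both the linear part and the entropy $f$ in $\mathcal{L}_\varepsilon$ decouple across the coordinates of $\x$, so that the inner infimum defining $G_\varepsilon(\blambda)$ splits into $d$ independent scalar problems. First I would regroup
\[\mathcal{L}_\varepsilon(\blambda,\x)\,=\,\b^T\blambda+\sum_{i=1}^d\Big[(\c-\A^T\blambda)_i\,\x_i+\varepsilon\,\x_i\ln\x_i\Big],\]
valid for $\x\geq0$ (with $f=+\infty$ off the orthant and the convention $0\ln0=0$), and note that for fixed $\blambda$ the bracketed sum is minimized coordinatewise. This reduces everything to understanding, for each $i$, the scalar map $h(t):=\alpha\,t+\varepsilon\,t\ln t$ on $[0,\infty)$, where $\alpha:=(\c-\A^T\blambda)_i$.

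The core of the argument, and the step requiring the most care, is the boundary analysis of $h$ on $[0,\infty)$. Here I would record three facts: (a) $h$ is strictly convex on $(0,\infty)$, since $h''(t)=\varepsilon/t>0$; (b) $h(t)\to+\infty$ as $t\to+\infty$; and (c) $h'(t)=\alpha+\varepsilon(\ln t+1)\to-\infty$ as $t\downarrow0$, so that $h$ is strictly decreasing near $0$ and the infimum is \emph{not} attained at the boundary point $t=0$ (this is precisely where the non-barrier nature of the entropy, $u\ln u\to0$, matters). Combining (a)--(c), $h$ attains its infimum at the unique interior critical point solving $\alpha+\varepsilon(\ln t+1)=0$, namely $t^*=\exp(-\alpha/\varepsilon-1)=\exp((\A^T\blambda-\c)_i/\varepsilon-1)>0$, which is exactly $\x_i^*(\blambda)$ of \eqref{lem-1:2}. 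Substituting $\varepsilon\ln t^*=-\alpha-\varepsilon$ into $h(t^*)$ gives $h(t^*)=\alpha t^*+t^*(-\alpha-\varepsilon)=-\varepsilon\,t^*$. Summing these $d$ values and restoring the term $\b^T\blambda$ then yields \eqref{lem-1:1}. In particular each scalar infimum is finite, so $G_\varepsilon(\blambda)>-\infty$ for every $\blambda$ (note that this conclusion, and hence the explicit form of $G_\varepsilon$, does not actually require the standing assumption $\A^T\blambda_0>0$; that hypothesis serves only to guarantee compactness of $\om$ for the subsequent duality analysis).

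It then remains to establish the qualitative properties. For concavity I would observe that $\blambda\mapsto\mathcal{L}_\varepsilon(\blambda,\x)$ is affine for each fixed $\x$, so $G_\varepsilon$, as a pointwise infimum of affine functions, is concave. For differentiability and the gradient \eqref{lem-1:3}, the cleanest route is to differentiate the closed form \eqref{lem-1:1} directly: the map $\blambda\mapsto\x_i^*(\blambda)=\exp((\A^T\blambda-\c)_i/\varepsilon-1)$ is $C^\infty$, with $\nabla_\blambda\x_i^*(\blambda)=\tfrac1\varepsilon\,\x_i^*(\blambda)\,\A_{\cdot i}$ (the $i$-th column of $\A$), whence
\[\nabla G_\varepsilon(\blambda)=\b-\varepsilon\sum_{i=1}^d\nabla_\blambda\x_i^*(\blambda)=\b-\sum_{i=1}^d\x_i^*(\blambda)\,\A_{\cdot i}=\b-\A\,\x^*(\blambda).\]
Equivalently, since $\x^*(\blambda)$ is the unique minimizer and depends smoothly on $\blambda$ while $\nabla_\x\mathcal{L}_\varepsilon$ vanishes there, the envelope theorem identifies $\nabla G_\varepsilon(\blambda)$ with the partial $\nabla_\blambda\mathcal{L}_\varepsilon(\blambda,\x)\big|_{\x=\x^*(\blambda)}=\b-\A\x^*(\blambda)$, giving \eqref{lem-1:3}.
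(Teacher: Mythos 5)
Your proof is correct, and its core coincides with the paper's: locate the interior stationary point of $\x\mapsto\mathcal{L}_\varepsilon(\blambda,\x)$, conclude from convexity that it is the unique global minimizer, substitute to get the closed form \eqref{lem-1:1}--\eqref{lem-1:2}, and obtain \eqref{lem-1:3} by differentiating the result (or by the envelope theorem). The differences are in execution. The paper argues at the vector level in one line (strict convexity plus vanishing gradient at the interior point $\x^*(\blambda)$) and gets concavity by substituting \eqref{lem-1:2} into $\mathcal{L}_\varepsilon$, i.e., by observing that $\b^T\blambda-\varepsilon\sum_i\exp((\A^T\blambda-\c)_i/\varepsilon-1)$ is concave in $\blambda$ as a sum of negated exponentials of affine functions. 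You instead decouple coordinatewise and do an explicit scalar boundary analysis: since $h'(t)\to-\infty$ as $t\downarrow 0$, the infimum over $[0,\infty)$ cannot sit at $t=0$, so it is attained at the unique interior critical point. This makes explicit a step the paper leaves implicit, namely why the infimum over the \emph{closed} orthant (where $f$ is continuous but not differentiable on the boundary) is attained at the interior critical point; the paper is tacitly using the standard fact that an interior critical point of a convex function is a global minimizer over the whole domain. Your concavity argument (pointwise infimum of affine functions of $\blambda$) is also different and more general than the paper's, since it needs no explicit formula, though it yields plain rather than strict concavity; the lemma claims only concavity, so both suffice. Finally, your observation that the hypothesis $\A^T\blambda_0>0$ is never used here is accurate: neither proof invokes it, and it serves only to guarantee compactness of $\om$ for the subsequent coercivity and duality results.
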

 \begin{proof}
 The function $\x\mapsto \mathcal{L}_\varepsilon(\blambda,\x)$ is strictly convex and differentiable on $\mathrm{int}(\mathrm{Dom}(f))$. Let $\x^*(\blambda)$ be as in \eqref{lem-1:2} and so
 $\x^*(\blambda)\in\mathrm{int}(\mathrm{Dom}(f))$. Observe that $\nabla_{\x} \mathcal{L}(\x^*(\blambda),\blambda)=0$, which implies that $\x^*(\blambda)$ is the unique minimizer of $\x\mapsto \mathcal{L}_\varepsilon(\blambda,\x)$, and yields \eqref{lem-1:1}-\eqref{lem-1:2}. To get \eqref{lem-1:3}, substitute 
  \eqref{lem-1:2}  in the gradient 
 $\nabla_{\x}G_\varepsilon(\x^*(\blambda))$. Finally,
 concavity of  $G_\varepsilon$ follows substituting \eqref{lem-1:2}  in $\mathcal{L}_\varepsilon(\x^*(\blambda),\blambda)$.
 \end{proof}
\subsection*{Coercivity of $G_\varepsilon$}
Let $\mathbb{S}^{m-1}\subset\R^m$ denote the Euclidean unit sphere, and assume that $\A$ is full row rank. 
\begin{lem}
\label{lem:coercive}
   Assume that there exists $0<\x_0\in\om$ and let $G_\varepsilon$ be as in \eqref{final-LP}. Then $-G_\varepsilon$ is coercive, that is, $G_\varepsilon\to-\infty$ as $\Vert\blambda\Vert\to +\infty.$
\end{lem}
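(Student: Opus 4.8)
The plan is to exploit the separable structure of $G_\varepsilon$ that appears once the feasibility of $\x_0$ is used to rewrite the linear term. First I would substitute $\b=\A\x_0$, so that for every $\blambda\in\R^m$,
\[
\b^T\blambda=(\A\x_0)^T\blambda=\x_0^T(\A^T\blambda)=\sum_{i=1}^d(\x_0)_i\,(\A^T\blambda)_i\,.
\]
Writing $\y:=\A^T\blambda\in\R^d$ and recalling the explicit form \eqref{final-LP}, this turns $G_\varepsilon$ into a sum of one-dimensional functions of the coordinates of $\y$:
\[
G_\varepsilon(\blambda)=\sum_{i=1}^d h_i(\y_i)\,,\qquad h_i(t):=(\x_0)_i\,t-\frac{\varepsilon}{e}\exp\!\big((t-\c_i)/\varepsilon\big)\,.
\]

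The key observation is that each $h_i$ is coercive downward, i.e. $h_i(t)\to-\infty$ as $|t|\to\infty$. As $t\to+\infty$ the exponential term dominates the linear one, for every $i$; as $t\to-\infty$ the exponential vanishes while the linear term $(\x_0)_i\,t\to-\infty$, \emph{precisely because} $(\x_0)_i>0$. This is exactly where strict positivity of $\x_0$ (Slater's condition) is indispensable: if some $(\x_0)_i$ vanished, the corresponding $h_i$ would merely tend to $0$ as $t\to-\infty$ and coercivity would fail along that coordinate. Being continuous and coercive downward, each $h_i$ attains a finite maximum $M_i:=\sup_t h_i(t)<\infty$.

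Next I would upgrade this coordinatewise behaviour to coercivity of the sum. Given $K>0$, choose $T_i$ with $h_i(t)<-K$ whenever $|t|>T_i$, and set $T:=\max_i T_i$. If $\|\y\|>\sqrt{d}\,T$ then $\|\y\|_\infty\ge\|\y\|/\sqrt{d}>T$, so some coordinate satisfies $|\y_{i_0}|>T_{i_0}$ and hence $h_{i_0}(\y_{i_0})<-K$; bounding every other term by its maximum gives
\[
G_\varepsilon(\blambda)=\sum_i h_i(\y_i)<-K+\sum_{i\neq i_0}M_i\le-K+\sum_{i=1}^d\max(M_i,0)\,.
\]
Since $\sum_i\max(M_i,0)$ is a fixed finite constant independent of $\y$, letting $K\to\infty$ shows $G_\varepsilon\to-\infty$ as $\|\y\|\to\infty$.

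Finally I would transfer this from $\y$ back to $\blambda$ using the full row rank of $\A$: then $\A^T$ is injective, so $\|\A^T\blambda\|\ge\sigma_{\min}\|\blambda\|$ with $\sigma_{\min}>0$ the smallest singular value of $\A$, and therefore $\|\y\|=\|\A^T\blambda\|\to\infty$ as $\|\blambda\|\to\infty$. Combined with the previous step this yields $G_\varepsilon(\blambda)\to-\infty$, i.e. $-G_\varepsilon$ is coercive. The only mildly delicate point is the passage from coordinatewise to global coercivity, i.e. controlling the contributions of all the non-blowing-up coordinates uniformly by the finite constant $\sum_i\max(M_i,0)$; everything else is a direct consequence of strict feasibility and full row rank.
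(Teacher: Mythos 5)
Your proof is correct and follows essentially the same route as the paper: substitute $\b=\A\x_0$ to turn $G_\varepsilon$ into a separable sum over the coordinates of $\A^T\blambda$, observe that strict positivity $(\x_0)_i>0$ makes each one-dimensional term tend to $-\infty$ in \emph{both} directions (exponential dominating as $t\to+\infty$, linear term dominating as $t\to-\infty$), and use full row rank of $\A$ to pass from $\Vert\blambda\Vert\to\infty$ to blow-up of $\A^T\blambda$. Two points where your execution is tighter than the paper's: (i) the paper links $\Vert\blambda_k\Vert\to\infty$ to divergence of some coordinate of $\A^T\blambda_k$ via a compactness/subsequence argument on $\mathbb{S}^{m-1}$, whereas you get it in one line from $\Vert\A^T\blambda\Vert\geq\sigma_{\min}\Vert\blambda\Vert$ with $\sigma_{\min}>0$; (ii) the paper argues only that the terms indexed by the blowing-up coordinates tend to $-\infty$, leaving implicit that the remaining terms stay bounded, whereas your uniform bound by $\sum_i\max(M_i,0)$ on the non-dominant terms makes this explicit and avoids extracting subsequences altogether.
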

\begin{proof}
    We first prove that if a sequence $(\blambda_k)_{k\in\N}$
is such that $\Vert\b^T\blambda\Vert\to\infty$ as $k\to\infty$,
then there exists $i$ such that $\vert(\A^T\blambda_k)_i\vert\to  +\infty$ as $k\to\infty$. Suppose not, that is, $\sup_k\sup_i\vert(\A^T\blambda_k)_i\vert< a$ for some scalar $a>0$. Then $\blambda_k/\Vert\blambda_k\Vert\in \mathbb{S}^{m-1}$,
and by compactness, there exists a subsequence
relabelled $(\blambda_k)_{k\in\N}$ for convenience, such that
$\blambda_k/\Vert\blambda_k\Vert\to\btheta\in \mathbb{S}^{m-1}$.
Hence for every $i=1,\ldots,d$, $\vert(\A^T\blambda_k/\Vert\blambda_k\Vert)_i\vert<a/\Vert\blambda_k\Vert\to 0$ as $k\to\infty$. Therefore
$\A^T\btheta=0$ which in turns implies $\btheta=0$ (as $\A$ is full row rank), in contradiction with $\Vert\btheta\Vert=1$.

Next, write $\b=\A\x_0$ (with $\x_0>0$), so that 
$\b^T\blambda= (\A^T\blambda-\c)^T\x_0+\c^T\x_0$.
Then for a sequence $(\blambda_k)_{k\in\N}$ with
$\Vert\blambda_k\Vert\to\infty$,
\begin{eqnarray*}
  G_\varepsilon(\blambda_k)&=&\b^T\blambda_k-\varepsilon\sum_{i=1}^d \x_i(\blambda_k)\\
  &=&
  \c^T\x_0+(\A^T\blambda_k-\c)^T\x_0-\varepsilon\Vert\x(\blambda_k)\Vert_1\\
    &=&\c^T\x_0+\sum_{i=1}^d (\A^T\blambda_k-\c)_i\left[(\x_0)_i-\varepsilon\frac{\exp((\A^T\blambda-\c)_i/\varepsilon)}{e\,(\A^T\blambda_k-\c)_i}
    \right]\,.
    \end{eqnarray*}
 So let $(\Vert\blambda_k\Vert)\to\infty$, and let 
 $I:=\{i: \vert(\A^T\blambda)_i\vert\to+\infty\}$ so that $I\neq\emptyset$.   
If $i\in I$ and $(\A^T\blambda_k-\c)_i\to-\infty$ then the item $i$
of the bracket is nonnegative  (as $(\x_0)_i>0)$ and 
therefore the item $i$ of the sum goes to $-\infty$.
Similarly, if $(\A^T\blambda-\c)_i\to+\infty$ then in the item $i$ the bracket goes to $-\infty$ and therefore 
the item $i$ of the sum goes to $-\infty$. Therefore
the contribution in sum of every item $i\in I$ gives $-\infty$, which yields the desired result.
\end{proof}  
\begin{cor}
\label{cor-1}
For every $\varepsilon>0$, the regularization $\P_\varepsilon$ of $\P$ has a unique optimal solution $0<\x^*(\varepsilon)\in\om$, and under the assumption of Lemma \ref{lem:coercive},
\begin{eqnarray}
    \label{cor-1:1} \tau_\varepsilon&=&\min_{\x\geq0}\c^T\x+\varepsilon\,f(\x)\,=\,\c^T\x^*(\varepsilon)+\varepsilon\sum_{i=1}^d \x^*_i(\varepsilon)\,\ln(\x^*_i(\varepsilon))\\
    \label{cor-1:2}
    &=&\max_{\blambda\in\R^m}G_\varepsilon(\blambda)\,=\,\b^T\blambda^*-\varepsilon\,\sum_{i=1}^d\x^*_i(\varepsilon)\,.
\end{eqnarray}
 \end{cor}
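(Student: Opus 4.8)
The plan is to glue together the two preceding lemmas: Lemma \ref{lem-1} already supplies the explicit dual function $G_\varepsilon$ together with its unique inner minimizer $\x^*(\blambda)>0$ and the gradient formula \eqref{lem-1:3}, while Lemma \ref{lem:coercive} guarantees that $G_\varepsilon$ attains its maximum. Everything then follows from a clean strong-duality (saddle-point) argument, and I would build the primal optimizer \emph{from} the dual maximizer rather than prove its existence separately. Concretely, by Lemma \ref{lem:coercive} the concave $G_\varepsilon$ attains its maximum at some $\blambda^*\in\R^m$, at which $\nabla G_\varepsilon(\blambda^*)=0$. By \eqref{lem-1:3} this reads $\b-\A\x^*(\blambda^*)=0$, so the point $\x^*(\blambda^*)$, which is strictly positive by \eqref{lem-1:2}, satisfies $\A\x^*(\blambda^*)=\b$ and hence lies in $\om$.

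Next I would record weak duality and then close the gap. For any $\blambda\in\R^m$ and any feasible $\x$ (i.e. $\A\x=\b,\ \x\ge0$) the multiplier term in the Lagrangian vanishes, so $G_\varepsilon(\blambda)=\inf_{\x}\mathcal{L}_\varepsilon(\blambda,\x)\le\mathcal{L}_\varepsilon(\blambda,\x)=\c^T\x+\varepsilon f(\x)$; taking the infimum over feasible $\x$ and then the maximum over $\blambda$ gives $\max_{\blambda}G_\varepsilon(\blambda)\le\tau_\varepsilon$. To obtain equality I evaluate at the feasible point $\x^*(\blambda^*)$ constructed above: it yields $\tau_\varepsilon\le\c^T\x^*(\blambda^*)+\varepsilon f(\x^*(\blambda^*))=\mathcal{L}_\varepsilon(\blambda^*,\x^*(\blambda^*))=G_\varepsilon(\blambda^*)$, where the last equality is \eqref{lem-1:1}. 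Combining the two inequalities gives $\tau_\varepsilon=\max_{\blambda}G_\varepsilon(\blambda)=G_\varepsilon(\blambda^*)$ and shows that $\x^*(\blambda^*)$ is a minimizer of $\P_\varepsilon$. Uniqueness is then immediate from strict convexity of the objective $\x\mapsto\c^T\x+\varepsilon f(\x)$ on $\{\x>0\}$ (each summand $\x_i\ln\x_i$ has second derivative $1/\x_i>0$), so I may set $\x^*(\varepsilon):=\x^*(\blambda^*)$, which is the unique optimal solution and is strictly positive, as asserted.

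It remains to certify the two displayed formulas. Equation \eqref{cor-1:2} is exactly $G_\varepsilon(\blambda^*)=\b^T\blambda^*-\varepsilon\sum_i\x^*_i(\varepsilon)$ read off from \eqref{lem-1:1}, while the first equality in \eqref{cor-1:1} is just the value of the objective at the optimizer. For internal consistency I would finally verify that these two expressions agree: substituting $\ln(\x^*_i)=(\A^T\blambda^*-\c)_i/\varepsilon-1$ from \eqref{lem-1:2} gives $\varepsilon\sum_i\x^*_i\ln\x^*_i=(\A\x^*)^T\blambda^*-\c^T\x^*-\varepsilon\sum_i\x^*_i$, and since $\A\x^*=\b$ this reduces $\c^T\x^*+\varepsilon\sum_i\x^*_i\ln\x^*_i$ precisely to $\b^T\blambda^*-\varepsilon\sum_i\x^*_i$. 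The only genuinely substantive step is the strong-duality linkage in the second paragraph; the rest is strict convexity, Slater feasibility, and the explicit formulas already recorded in Lemma \ref{lem-1}. The point to be careful about is that the \emph{unconstrained} dual maximizer $\blambda^*$ delivers a \emph{feasible} primal point — which is exactly the content of the vanishing gradient $\nabla G_\varepsilon(\blambda^*)=0$ via \eqref{lem-1:3} — since this is what lets weak duality be turned into an exact match without any residual duality gap.
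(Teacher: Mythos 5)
Your proof is correct and follows essentially the same route as the paper's: coercivity gives a dual maximizer $\blambda^*$, the gradient formula \eqref{lem-1:3} turns $\nabla G_\varepsilon(\blambda^*)=0$ into primal feasibility of $\x^*(\blambda^*)>0$, and weak duality plus evaluation of the Lagrangian at this feasible point closes the gap, with uniqueness from strict convexity. The only (harmless) organizational difference is that the paper first establishes existence of the primal minimizer $\x^*(\varepsilon)$ separately by Weierstrass on the compact set $\om$ and then identifies it with $\x^*(\blambda^*)$, whereas you construct the minimizer directly from the dual maximizer; your added consistency check that \eqref{cor-1:1} and \eqref{cor-1:2} agree via the substitution $\ln\x_i^*=(\A^T\blambda^*-\c)_i/\varepsilon-1$ is a nice touch not spelled out in the paper.
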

    
\begin{proof}
 $\P_\varepsilon$ is equivalent to $\min_{\x\in\om} \c^T x +\varepsilon f(\x)$, and the infimum of a continuous function on a compact set is attained. Moreover as $\x\mapsto \c^T\x+\varepsilon\,f(\x)$ is strictly convex, a minimizer $\x^*(\varepsilon)\in\om$ is unique.

 Next, recall that $G_\varepsilon(\blambda)\leq \tau_\varepsilon$ for all $\blambda\in\R^m$, $G_\varepsilon$ is continuous, differentiable and by Lemma \ref{lem:coercive}, $-G_\varepsilon$ is coercive. Therefore its maximum is attained at $\blambda^*\in\R^m$ where $\nabla G_\varepsilon(\blambda^*)=0=\b-\A\,\x^*(\blambda^*)$, and so
 $\x^*(\blambda^*)\in\om$. Moreover a minimizer $\blambda^*$ is unique because in view of \eqref{lem-1:1}, and
 \eqref{lem-1:2}, $G_\varepsilon$ is strictly concave. Finally,  as 
 \[G_\varepsilon(\blambda^*)=\L_\varepsilon(\blambda^*,\x^*(\blambda^*))\,=\,\c^T\x^*(\blambda^*)+\varepsilon\,f(\x^*(\blambda^*))\,\leq\,\tau_\varepsilon\,,\]
 and $\x^*(\blambda^*)\in\om$, we conclude that $\x^*(\blambda^*)$ is a minimizer of $\P_\varepsilon$ and by uniqueness 
 $\x^*(\blambda^*)=\x^*(\varepsilon)$.
 \end{proof}
 
As a result, one may obtain $\tau_{\bv}$ by solving an unconstrained optimization problem \eqref{cor-1:1}, i.e.,
by maximizing the concave (and explicit) differentiable function 
$G_{\bv}$ over $\R^d$. Therefore with $\epsilon>0$ small and fixed, one obtains an approximation
of $\tau$, the optimal value of the LP \eqref{def-LP}. Moreover, at the unique maximizer $\blambda^*$ of \eqref{cor-1:1}, one extracts the unique minimizer $\x^*(\varepsilon)=\x^*(\blambda^*))$
of $\P_\varepsilon$ directly by the formula \eqref{lem-1:2}.

Finally, observe that in the canonical form \eqref{def-LP}, one has $m\leq d$
and even sometimes $m\ll d$, which is an additional nice feature of \eqref{cor-1:1}. 

\subsection{Comparison with the log-barrier}
Let $\mu>0$ be fixed, and consider
\[\P_\mu:\quad\tau_\mu\,:=\,\inf_{\x} \{\,\c^T\x-\mu\sum_{i=1}^d \ln(\x_i):\:\A\x=\b\,\}\,.\]
At a minimizer $\x(\mu)$, the necessary KKT-optimality conditions yield
\[\A\x(\mu)\,=\,\b\,;\quad (\c-\A^T\blambda(\mu))_i=\mu/\x_i(\mu):,,\quad i=1,\ldots,d\,,\]
for some multiplier $\blambda(\mu)$. That is:
\[\x_i(\mu)\,=\,\frac{\mu}{\c_i-\A^T\blambda(\mu)}\,,\quad i=1,\ldots,d\,,\]
from which nonnegativity is not automatic. It requires that 
$\A^T\blambda(\mu)<\c$, and so the analogue $H_\mu$ of $G_\varepsilon$ reads:
\[H_\mu(\blambda)\,:=\,\b^T\blambda+\sum_{i=1}^d \frac{\c_i\mu}{(\c-\A^T\blambda)_i}+\mu\ln ((\c-\A^T\blambda)_i)\,,\quad \A^T\blambda\,<\,\c\,,\]
defined only for $\A^T\blambda<\c$, whereas $G_\varepsilon$
is defined for every $\blambda\in\R^m$. This is an important difference if one wants to maximize 
$H_\mu$ to retrieve the optimal value of $\P_\mu$ via Lagrangian relaxation-scheme\footnote{However, in interior point algorithms one usually directly minimizes
$\c^T\x-\mu\sum_{i=1}^d \ln(\x_i)$ over the subspace $\A\x=\b$, and then one iterates with a lower value of the parameter $\mu$, etc.}. Finally one remarks that 
if $f(\x)$ is \emph{not} a barrier for the positive orthant,
its gradient $\x\mapsto \sum_{i=1}^d\ln(\x_i)+1$ 
acts as  a barrier in gradient methods.

\subsection{Semidefinite Programming via von-Neumann entropy}
Consider the semidefinite program (SDP)
\begin{equation}
\label{SDP}
\P:\quad \rho\,=\,\min\,\{\Tr (\C\cdot\X):\: \mathcal{A}(\X)\,=\b\,;\:\X\succeq 0\, \}
\end{equation} 
where $\C\in \mathcal{S}^n$ and $\mathcal{A}\,:\,\mathcal{S}^n\mapsto\R^m$ is a linear mapping.

We assume that
the spectrahedron $\om:=\{\,\mathcal{A}(\X)\,=\b\,;\:\X\succeq 0\,\}$ is compact, and we
consider the following  regularization $\P_{\bv}$ of $\P$, where $\bv>0$ is fixed, arbitrary.
\begin{equation}
\label{SDPreg}
\Q_{\bv}:\quad \rho_{\bv}\,=\,\min_\X\,\{\Tr (\C\cdot\X)\,+\, \bv\,\Tr(\X\cdot\ln(\X)):\: \mathcal{A}(\X)\,=\,\b\,;\:\X\succ 0\, \}.
\end{equation} 
$\P_{\bv}$ is a convex problem as the von-Neumann entropy 
\[   \X\mapsto \Tr(\X\cdot\ln(\X))\,,\quad\forall \X\in\mathcal{S}^n_{++}\:(:=\{\,\X\in\mathcal{S}^n: \X\succ0\,\})\]
is a convex function. Next define
\begin{equation}
    \label{def-von-neumann}
    \X\mapsto g(\X):=\left\{\begin{array}{rl}\Tr(\X\cdot\ln(\X)) &\mbox{if $\X\succ0$}\\
    0 & \mbox{if $\X \succeq 0$ and $\X$ is singular} \\
    +\infty &\mbox{otherwise.}\end{array}\right.
\end{equation}
So $g$ is continuous on $\mathcal{S}^n_+=\{\X: \X\succeq0\}$ by using the convention $u\ln(u)=0$ if $u=0$.
\begin{prop}
The convex optimization problem has an optimal solution.
\end{prop}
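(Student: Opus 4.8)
The plan is to mirror the treatment of the LP in Corollary~\ref{cor-1}, with scalar entropy replaced by its matrix analogue and compactness of the spectrahedron $\om$ playing the same role as compactness of the polytope. First I would rephrase $\Q_{\bv}$ as the minimization of the single function $\Phi(\X):=\Tr(\C\cdot\X)+\bv\,g(\X)$ over the \emph{closed} set $\om=\{\X\succeq0:\mathcal{A}(\X)=\b\}$, where $g$ is the continuous extension \eqref{def-von-neumann} of the von Neumann entropy to $\mathcal{S}^n_+$. Since $\Tr(\C\cdot\X)$ is affine and $g$ is continuous on $\mathcal{S}^n_+$, the map $\Phi$ is continuous on the compact set $\om$; by Weierstrass's theorem it attains its minimum at some $\X^*\in\om$. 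Because the positive definite feasible points $\{\X\succ0\}\cap\om$ are dense in $\om$ (by Slater's condition) and $\Phi$ is continuous, this minimum equals $\rho_{\bv}$, so it remains only to exhibit a \emph{feasible} minimizer of $\Q_{\bv}$, i.e. one with $\X^*\succ0$.

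Establishing that the minimizer lies in the interior is the step I expect to be the main obstacle. The underlying reason is exactly the scalar phenomenon $\frac{d}{du}(u\ln u)=\ln u+1\to-\infty$ as $u\downarrow0$: the entropy exerts an unbounded pull away from the boundary of the cone. Assuming Slater's condition provides an interior point $0\prec\X_0\in\om$, suppose for contradiction that $\X^*$ is singular, and follow the feasible segment $\X_t:=(1-t)\X^*+t\,\X_0$, which is positive definite for every $t\in(0,1]$. The affine part contributes the finite slope $\Tr(\C\cdot(\X_0-\X^*))$, whereas the one-sided derivative at $t=0^+$ of $t\mapsto\bv\,\Tr(\X_t\ln\X_t)$ diverges to $-\infty$, driven by the logarithmic singularity along the eigendirections on which $\X^*$ vanishes. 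Hence $\Phi(\X_t)<\Phi(\X^*)$ for all small $t>0$, contradicting minimality. Making this rigorous is the technical heart: one must control $\Tr(\X_t\ln\X_t)$ near a boundary point and extract the divergent slope, which can be done by restricting to the kernel of $\X^*$ (where first-order perturbation theory gives eigenvalues of order $t$) and reducing to the scalar estimate, or via the spectral calculus of the operator-convex map $u\mapsto u\ln u$.

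Finally, with $\X^*\succ0$ in hand, $\X^*$ is a genuine optimal solution of $\Q_{\bv}$. Uniqueness then follows from strict convexity of $\X\mapsto\Tr(\X\ln\X)$ on $\mathcal{S}^n_{++}$ together with affineness of $\Tr(\C\cdot\X)$, exactly as for $\x^*(\bv)$ in Corollary~\ref{cor-1}. The stationarity condition $\nabla_\X\mathcal{L}_{\bv}=\C-\mathcal{A}^*\blambda+\bv\,(\ln\X+\bI)=0$ moreover yields the closed form $\X^*(\bv)=\exp((\mathcal{A}^*\blambda^*-\C)/\bv-\bI)\succ0$, the matrix analogue of \eqref{lem-1:2}, once the dual maximizer $\blambda^*$ of $\hat G_{\bv}$ in \eqref{final-SDP} is identified.
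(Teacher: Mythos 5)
Your first paragraph is, in substance, the paper's entire proof: the paper simply observes that $\om$ is compact and that, replacing the entropy by its continuous extension $g$ from \eqref{def-von-neumann}, the cost $\X\mapsto\Tr(\C\cdot\X)+\bv\,g(\X)$ is continuous on $\mathcal{S}^n_+$, so Weierstrass yields a minimizer on $\om$. Everything you do after that goes beyond the paper. In particular, the paper never addresses the point you correctly flag as the technical heart: problem \eqref{SDPreg} is posed over $\X\succ0$, so a Weierstrass minimizer of the extended problem could a priori sit on the boundary and be infeasible for $\Q_{\bv}$ as written; the paper silently identifies $\Q_{\bv}$ with its closure over $\om$. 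Your boundary-repulsion argument closes that gap and is correct: along the segment $\X_t=(1-t)\X^*+t\,\X_0$ toward the Slater point, the kernel eigenvalues of $\X_t$ are trapped between $t\,\lambda_{\min}(\X_0)$ and $t\,\lambda_{\max}(\X_0)$ (by $\X_t\succeq t\X_0$ and Weyl's inequalities), so the entropy term behaves like $c\,t\ln t$ and dominates the linear term $t\,\Tr(\C\cdot(\X_0-\X^*))$, contradicting minimality of a singular $\X^*$. Your uniqueness and closed-form remarks likewise match what the paper only asserts later (Lemma \ref{lem:G} and Corollary \ref{SDPcor-1}). So: same core approach as the paper, with your interiority argument supplying rigor that the paper omits.
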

\begin{proof}
It follows from the fact that $\om$ is compact and 
in \eqref{SDPreg} one may rather consider the cost function $\X\mapsto \Tr(\C\cdot\X)+\varepsilon \,g(\X)$ which is continuous.
\end{proof}
Let $\mathcal{A}^*:\R^m\to\mathcal{S}^n$ be the adjoint linear, injective mapping associated with $\mathcal{A}$, and introduce the Lagrangian $G_\varepsilon:\R^m\to\R$:
\begin{equation}
\label{def-G-sdp}
G_\varepsilon(\blambda)\,:=\,\b^T\blambda+
\inf_\X\,\{\Tr (\X\cdot(\C\,-\,\mathcal{A}^*\blambda))+\,\bv\,\Tr(\X\cdot\ln(\X))\,\}\,.\end{equation}
Observe that $G_\varepsilon(\blambda)\leq\rho_\varepsilon$, for every $\blambda\in\R^m$.
\begin{lem}
\label{lem:G}
For every fixed $\varepsilon>0$, the function $G_\varepsilon$ is strictly concave and satisfies
\begin{eqnarray}
        \label{lem:G-formula}
    G_\varepsilon(\blambda)&=&\b^T\blambda-\varepsilon\,\mathrm{Tr} \,\X(\blambda)\\
    \nonumber\mbox{with }\X(\blambda)&=&\exp((\mathcal{A}^*\blambda-\C)/\varepsilon)-\mathbf{I})\,.
    \end{eqnarray}
\end{lem}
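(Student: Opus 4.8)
The plan is to reproduce the scalar argument of Lemma~\ref{lem-1} at the matrix level, the one genuinely new ingredient being trace/matrix calculus. First I would fix $\blambda\in\R^m$ and analyze the inner map $\X\mapsto\varphi_\blambda(\X):=\Tr(\X\cdot(\C-\mathcal{A}^*\blambda))+\varepsilon\,\Tr(\X\cdot\ln(\X))$ on the open convex cone $\mathcal{S}^n_{++}$. The affine term does not affect convexity, and since $\X\mapsto\Tr(\X\ln\X)$ is strictly convex and differentiable there, $\varphi_\blambda$ is strictly convex and differentiable. Applying the spectral-calculus identity $\tfrac{d}{dt}\Tr(h(\X+t\mathbf{H}))\big|_{t=0}=\Tr(h'(\X)\,\mathbf{H})$ with $h(u)=u\ln u$, so $h'(u)=\ln u+1$, gives $\nabla\varphi_\blambda(\X)=(\C-\mathcal{A}^*\blambda)+\varepsilon(\ln\X+\mathbf{I})$.

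Next I would solve $\nabla\varphi_\blambda(\X)=0$, i.e. $\ln\X=(\mathcal{A}^*\blambda-\C)/\varepsilon-\mathbf{I}$, obtaining the candidate $\X(\blambda)=\exp((\mathcal{A}^*\blambda-\C)/\varepsilon-\mathbf{I})$. As the exponential of a symmetric matrix, $\X(\blambda)\succ0$, so it lies in $\mathcal{S}^n_{++}$ where $\varphi_\blambda$ is defined and differentiable; being a critical point of a strictly convex function on an open convex set, it is the unique global minimizer, whence $G_\varepsilon(\blambda)=\b^T\blambda+\varphi_\blambda(\X(\blambda))$. Substituting $\ln\X(\blambda)=(\mathcal{A}^*\blambda-\C)/\varepsilon-\mathbf{I}$ makes the bilinear terms $\Tr(\X(\blambda)(\C-\mathcal{A}^*\blambda))$ and $\Tr(\X(\blambda)(\mathcal{A}^*\blambda-\C))$ cancel, leaving $G_\varepsilon(\blambda)=\b^T\blambda-\varepsilon\,\Tr\,\X(\blambda)$, which is \eqref{lem:G-formula}.

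Concavity of $G_\varepsilon$ is then cheap: as an infimum over $\X$ of functions that are affine in $\blambda$, it is concave. For \emph{strict} concavity I would instead exploit the explicit formula. Since $\b^T\blambda$ is affine, it suffices that $\blambda\mapsto\Tr(\exp((\mathcal{A}^*\blambda-\C)/\varepsilon-\mathbf{I}))$ be strictly convex. The map $\mathbf{Y}\mapsto\Tr(\exp(\mathbf{Y}))$ is strictly convex on $\mathcal{S}^n$: its second directional derivative along $\mathbf{H}$, after diagonalizing $\mathbf{Y}=\U^T\mathbf{\Lambda}\U$ with $\mathbf{\Lambda}=\mathrm{diag}(y_1,\dots,y_n)$ and setting $\tilde{\mathbf{H}}=\U\mathbf{H}\U^T$, equals $\int_0^1\sum_{i,j}e^{s y_i+(1-s)y_j}\,\tilde{\mathbf{H}}_{ij}^2\,ds$, which is strictly positive unless $\mathbf{H}=0$. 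Composing with the affine map $\blambda\mapsto(\mathcal{A}^*\blambda-\C)/\varepsilon-\mathbf{I}$ preserves strict convexity precisely because $\mathcal{A}^*$ is injective, so distinct $\blambda$ produce distinct arguments; hence $G_\varepsilon$ is strictly concave.

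I expect the main obstacle to be the matrix calculus rather than the convex-analytic skeleton. Because of non-commutativity, $\tfrac{d}{dt}\exp(\mathbf{Y}+t\mathbf{H})$ is not $\exp(\mathbf{Y})\mathbf{H}$ but the Daleckii--Krein integral $\int_0^1 e^{s\mathbf{Y}}\mathbf{H}\,e^{(1-s)\mathbf{Y}}\,ds$, and similarly the gradient of $\Tr(\X\ln\X)$ must be justified through the integral representation of derivatives of matrix functions. The redeeming feature is that I only ever differentiate a \emph{trace}: cyclicity collapses these integrals to the clean expressions $\Tr(h'(\X)\mathbf{H})$ and $\Tr(\exp(\mathbf{Y})\mathbf{H})$, and the genuinely non-commutative integral survives only in the second-derivative computation used for strict concavity, where it is exactly what produces the positive weights $e^{s y_i+(1-s)y_j}$.
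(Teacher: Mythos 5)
Your proof is correct and follows essentially the same route as the paper's: compute the gradient of the strictly convex inner map $\X\mapsto\Tr(\X\cdot(\C-\mathcal{A}^*\blambda))+\varepsilon\,\Tr(\X\cdot\ln(\X))$, identify the interior critical point $\X(\blambda)=\exp((\mathcal{A}^*\blambda-\C)/\varepsilon-\mathbf{I})\succ0$ as the unique minimizer, and substitute to obtain $G_\varepsilon(\blambda)=\b^T\blambda-\varepsilon\,\Tr\,\X(\blambda)$. The only difference is that you supply the details the paper leaves implicit --- the Daleckii--Krein/trace-cyclicity justification of the gradient formula, and the strict convexity of $\mathbf{Y}\mapsto\Tr(\exp(\mathbf{Y}))$ combined with injectivity of $\mathcal{A}^*$ to get strict concavity, which the paper merely asserts ``follows'' from the explicit formula.
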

\begin{proof}
 The function
 \[\X\mapsto \Tr ((\C\,-\,\mathcal{A}^*\blambda)\cdot \X)+\,\bv\,\Tr(\X\cdot\log(\X))\]
 is convex. Its gradient, which reads
 \[\X\mapsto \C-\mathcal{A}^*\blambda+\varepsilon\,(\ln(\X)+\mathbf{I})\,,\quad \forall\X\succ0\,,\]
 vanishes at $\X(\blambda):=\exp((\mathcal{A}^*\blambda-\C)/\varepsilon-\mathbf{I})\succ0$.  
 Next, evaluating
 $G_\varepsilon$ at $\X(\blambda)$ 
 yields  \eqref{lem:G-formula}, from 
 which strict concavity follows.
 \end{proof}

\subsection*{Coercivity of $G_\varepsilon$}
We first recall the following technical result.
\begin{prop}
\label{prop-eigenvalue}
  Let $\A,\B\in\mathcal{S}^n$ be $(n,n)$ real symmetric matrices, with $\B\succeq0$, and let $(\lambda^A_j)$
  and $(\lambda^B_j$ be their respective eigenvalues arranged in increasing order $\lambda^A_1\leq\cdots\leq\lambda^A_n$,
  and $\lambda^B_1\leq\cdots\leq\lambda^B_n$. Then:
  \begin{equation}
      \label{aux-1}
      \sum_{j=1}^n \lambda^A_j\,\lambda^B_{n-j-1}\,\leq\,\Tr(\A\cdot\B)\,\leq\,\sum_{j=1}^n \lambda^A_j\,\lambda^B_{j}\,.
  \end{equation}
\end{prop}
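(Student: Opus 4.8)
The plan is to reduce $\Tr(\A\cdot\B)$ to a linear functional over the Birkhoff polytope of doubly stochastic matrices, and then invoke the Birkhoff--von Neumann theorem together with the rearrangement inequality. First I would write the spectral decompositions $\A = P\,D_A\,P^T$ and $\B = Q\,D_B\,Q^T$, where $P,Q$ are orthogonal and $D_A = \mathrm{diag}(\lambda^A_1,\dots,\lambda^A_n)$, $D_B = \mathrm{diag}(\lambda^B_1,\dots,\lambda^B_n)$ with eigenvalues in increasing order. Setting $R := P^T Q$ (again orthogonal) and using cyclicity of the trace, one obtains
\[
\Tr(\A\cdot\B)\,=\,\Tr(D_A\,R\,D_B\,R^T)\,=\,\sum_{i,j=1}^n \lambda^A_i\,\lambda^B_j\,R_{ij}^2\,,
\]
the last equality following from $(R\,D_B\,R^T)_{ii}=\sum_k R_{ik}^2\,\lambda^B_k$.

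Next I would set $S_{ij} := R_{ij}^2$ and observe that, since the rows and columns of the orthogonal matrix $R$ are unit vectors, one has $\sum_i S_{ij} = \sum_j S_{ij} = 1$; that is, $S=(S_{ij})$ is doubly stochastic. Hence
\[
\Tr(\A\cdot\B)\,=\,\sum_{i,j=1}^n S_{ij}\,\lambda^A_i\,\lambda^B_j\,=:\,\Phi(S)
\]
is a \emph{linear} functional of $S$ over the compact convex set of doubly stochastic matrices. The Birkhoff--von Neumann theorem states that the extreme points of this set are precisely the permutation matrices, and since $\Phi$ is linear, both its maximum and its minimum over the set are attained at some permutation matrix $P_\sigma$, where $\Phi(P_\sigma)=\sum_i \lambda^A_i\,\lambda^B_{\sigma(i)}$.

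Finally I would apply the rearrangement inequality: among all permutations $\sigma$ of $\{1,\dots,n\}$, the sum $\sum_i \lambda^A_i\,\lambda^B_{\sigma(i)}$ is maximized when $\sigma$ is the identity and minimized when $\sigma$ is the order-reversing permutation $\sigma(i)=n+1-i$, because both eigenvalue sequences are sorted increasingly. This yields
\[
\sum_{j=1}^n \lambda^A_j\,\lambda^B_{n+1-j}\,\le\,\Tr(\A\cdot\B)\,\le\,\sum_{j=1}^n \lambda^A_j\,\lambda^B_j\,,
\]
which is the claim \eqref{aux-1} (the lower index there reading $n+1-j$). The main obstacle is conceptual rather than computational: recognizing that $(R_{ij}^2)$ is doubly stochastic so that the problem linearizes over the Birkhoff polytope; once that is in place, Birkhoff--von Neumann and the rearrangement inequality close the argument. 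I note in passing that positive semidefiniteness of $\B$ is not actually used in this chain of reasoning, so the hypothesis $\B\succeq0$ could be dropped.
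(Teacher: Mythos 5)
Your proof is correct, but it takes a genuinely different route from the paper's. The paper disposes of the proposition in two lines: it treats the case $\A\succeq0$ (with $\B\succeq0$ from the hypothesis) as well known, and reduces the general case to it by the spectral shift $\A = (\A+a\,\mathbf{I})-a\,\mathbf{I}$ with $a>0$ large enough that $\A+a\,\mathbf{I}\succeq0$; since $\Tr((\A+a\,\mathbf{I})\cdot\B)=\Tr(\A\cdot\B)+a\,\Tr(\B)$ and both bounding sums also shift by exactly $a\sum_j\lambda^B_j=a\,\Tr(\B)$, the inequality for the shifted pair yields the inequality for $(\A,\B)$. The hypothesis $\B\succeq0$ is what makes this reduction land in the known PSD base case. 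Your argument is instead self-contained: writing $\Tr(\A\cdot\B)=\sum_{i,j}\lambda^A_i\lambda^B_j R_{ij}^2$ with $S=(R_{ij}^2)$ doubly stochastic, then invoking Birkhoff--von Neumann (extreme points of the Birkhoff polytope are permutation matrices, so the linear functional $\Phi$ is extremized there) and the rearrangement inequality, which indeed holds for arbitrary real sequences, proves the statement with no positivity assumption at all. So your closing remark is right: $\B\succeq0$ can be dropped, and in effect you have proved the classical trace-rearrangement (von Neumann/Ruhe-type) inequality in full generality, including the ``well-known'' base case that the paper leans on. What the paper's approach buys is brevity, and the hypothesis costs it nothing since the proposition is only ever applied with $\B=\X_0\succ0$; what yours buys is a complete proof from named classical theorems and a strictly stronger statement. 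One last point you handled silently and correctly: the lower-bound index $n-j-1$ in the statement is a typo (it is out of range for $j=n$) and should read $n+1-j$, which is how your proof interprets it.
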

\begin{proof}
 If $\A\succeq0$ the result is well-known, otherwise, write
 $\A=\A+a\,\bf I$ -$a\,\bf I$ for $a>0$ sufficiently large so that
 $\lambda^{\A+a\,\bf I}_j=\lambda^A_j+a\geq0$ for all $j$.
\end{proof}

\begin{lem}\label{lem:SDPcoercive}
   Let $\om$ be compact and assume that:
   
   -- Slater's condition holds, i.e., there exists $0\prec\X_0\in \om$, and

   -- $\mathcal{A}^*\blambda=0\Rightarrow \blambda=0$.

   Then $G_\varepsilon(\blambda)\to -\infty$ whenever $\Vert\blambda\Vert\to+\infty$,
\end{lem}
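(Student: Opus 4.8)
The plan is to transcribe the LP argument of Lemma \ref{lem:coercive} into the matrix setting, with the trace inequality of Proposition \ref{prop-eigenvalue} playing the role of the coordinatewise splitting. First I would use Slater's condition to fix $\X_0\succ0$ with $\mathcal{A}(\X_0)=\b$, and rewrite the linear part as $\b^T\blambda=\langle\mathcal{A}(\X_0),\blambda\rangle=\Tr(\X_0\,\mathcal{A}^*\blambda)$. Setting $M:=\mathcal{A}^*\blambda-\C\in\mathcal{S}^n$ and recalling from Lemma \ref{lem:G} that $\X(\blambda)=\exp(M/\varepsilon-\I)=e^{-1}\exp(M/\varepsilon)$, so that the eigenvalues of $\X(\blambda)$ are $e^{-1}e^{\mu_j/\varepsilon}$ when $\mu_1\le\cdots\le\mu_n$ denote the eigenvalues of $M$, this yields the exact identity
\[
G_\varepsilon(\blambda)=\Tr(\X_0\C)+\Tr(\X_0\,M)-\frac{\varepsilon}{e}\sum_{j=1}^n e^{\mu_j/\varepsilon}.
\]

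The second step reduces $G_\varepsilon$ to a sum of scalar functions of the $\mu_j$. Let $0<p_1\le\cdots\le p_n$ be the eigenvalues of $\X_0$ (all strictly positive since $\X_0\succ0$). Applying the upper bound of Proposition \ref{prop-eigenvalue} to $\Tr(\X_0\,M)$ (with the role of $\B$ played by $\X_0\succeq0$) gives $\Tr(\X_0\,M)\le\sum_j p_j\mu_j$, whence
\[
G_\varepsilon(\blambda)\le\Tr(\X_0\C)+\sum_{j=1}^n\Big(p_j\mu_j-\frac{\varepsilon}{e}\,e^{\mu_j/\varepsilon}\Big).
\]
For each fixed $p_j>0$ the scalar map $t\mapsto p_jt-\tfrac{\varepsilon}{e}e^{t/\varepsilon}$ is bounded above (its maximum being $\varepsilon\,p_j\ln p_j$, attained at $t=\varepsilon(1+\ln p_j)$) and tends to $-\infty$ as $|t|\to\infty$: through the exponential as $t\to+\infty$, and through the linear term $p_jt$ as $t\to-\infty$. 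This is exactly the scalar behaviour already exploited in the LP proof.

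The third step forces a diverging eigenvalue. Since $\mathcal{A}^*$ is linear and injective on finite-dimensional spaces, $\|\mathcal{A}^*\blambda\|\ge c\,\|\blambda\|$ with $c:=\min_{\|\blambda\|=1}\|\mathcal{A}^*\blambda\|>0$, so $\|M\|\ge\|\mathcal{A}^*\blambda\|-\|\C\|\to+\infty$ as $\|\blambda\|\to+\infty$, and therefore $\max_j|\mu_j|\to+\infty$. Given any sequence with $\|\blambda_k\|\to\infty$, I would pass to a subsequence along which a fixed index $j_0$ satisfies $|\mu_{j_0}|\to\infty$; then the $j_0$-term of the bound tends to $-\infty$ while every other term stays $\le\varepsilon\,p_j\ln p_j$, a constant. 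Hence $G_\varepsilon(\blambda_k)\to-\infty$, and since the sequence was arbitrary, $G_\varepsilon$ is coercive.

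The main obstacle is the bookkeeping around the trace inequality: one must invoke the correct same-order pairing in Proposition \ref{prop-eigenvalue} to obtain a genuine upper bound on $\Tr(\X_0\,M)$, and then check that the divergence of the largest-in-modulus eigenvalue always lands on a summand carrying a strictly positive weight $p_j$. The latter is precisely what Slater's condition secures, since $\X_0\succ0$ makes every $p_j>0$; this positivity is what renders the pairing harmless, regardless of whether it is $\mu_1\to-\infty$ or $\mu_n\to+\infty$ that drives $G_\varepsilon$ to $-\infty$.
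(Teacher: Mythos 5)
Your proof is correct and follows essentially the same route as the paper's: rewrite $\b^T\blambda$ via the Slater point $\X_0$, bound $\Tr(\X_0(\mathcal{A}^*\blambda-\C))$ by the sorted-eigenvalue pairing of Proposition \ref{prop-eigenvalue}, reduce to the scalar maps $t\mapsto p_jt-\tfrac{\varepsilon}{e}e^{t/\varepsilon}$, and use injectivity of $\mathcal{A}^*$ to force some eigenvalue of $\mathcal{A}^*\blambda-\C$ to diverge. If anything, your write-up is slightly tighter than the paper's in two spots: you note that $\Tr(\exp(M/\varepsilon-\I))$ equals (not merely dominates) the sum of exponentials of eigenvalues, and you explicitly bound the non-diverging summands by the constants $\varepsilon p_j\ln p_j$, a step the paper leaves implicit.
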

\begin{proof}
We first prove that $\Vert\mathcal{A}^*\blambda\Vert\to\infty$ whenever $\Vert\blambda\Vert\to\infty$. So let $(\blambda_k)_{k\in\N}$ be a sequence such that $\Vert\blambda_k\Vert\to\infty$ as $k\to\infty$.  Then there is a subsequence still denoted $(\blambda_k)_{k\in\N}$ for convenience, such that
$\blambda_k/\Vert\blambda_k\Vert\to \blambda^*\in \mathbb{S}^{m-1}$. Hence suppose that $\sup_k\Vert\mathcal{A}^*\blambda_k\Vert<a$ for some $
a>0$. Then 
$\mathcal{A}^*\blambda_k/\Vert\blambda_k\Vert\to \mathcal{A}^*\blambda^*=0$, which in turn implies $\blambda^*=0$, in contradiction with $\blambda^*\in\mathbb{S}^{m-1}$.
Next write
\[\b^T\blambda\,=\,\Tr(\mathcal{A}^*\blambda-\C)\cdot\X_0)+\Tr(\C\cdot\X_0)\,,\]
so that
\begin{eqnarray*}
  G_\varepsilon(\blambda_k)&=&\b^T\blambda_k-\varepsilon\,\Tr(\X(\blambda_k))\\
  &=&
  \Tr(\C\cdot\X_0)+\Tr((\mathcal{A}^*\blambda_k-\C)\cdot\X_0)-\varepsilon\,\Tr(\X(\blambda_k))\\
  &=&\Tr(\C\cdot\X_0)+\Tr((\mathcal{A}^*\blambda_k-\C)\cdot\X_0)-\varepsilon\,\Tr(\exp(\mathcal{A}^*\blambda_k-\C)/\varepsilon)-\bf I)
    \end{eqnarray*}
Let $(\sigma_1(\blambda_k),\ldots\sigma_m(\blambda_k))$ be the eigenvalues of the real symmetric matrix
$\mathcal{\A}^*\blambda_k-\C$, and let $(u_1,\ldots,u_m)$
that of $\X_0\succ0$, arranged in increasing order. By Proposition \ref{prop-eigenvalue}:
\begin{eqnarray*}
    \Tr((\A^*\blambda_k-\C)\cdot\X_0)&\leq&
\sum_{i=1}^n u_i\,\sigma_i(\blambda_k)\\
\Tr(\exp((\A^*\blambda_k-\C)/\varepsilon-I)&\geq&
\sum_{i=1}^n \exp(\sigma_i(\blambda_k)/\varepsilon-1)\,,
\end{eqnarray*}
and therefore
\[G_\varepsilon(\blambda_k)\,\leq\,\sum_{i=1}^n 
u_i\,\sigma_i(\blambda_k)-\varepsilon
\exp(\sigma_i(\blambda_k)/\varepsilon-1)\,.\]
Since $\Vert\mathcal{A}^*\blambda_k\Vert\to\infty$ as $k\to\infty$, one has $\vert\sigma_i(\lambda_k)\vert\to\infty$ for at least one index $i$, and so let $I^+:=\{i:\sigma_i(\blambda_k)\to+\infty\}$, and  $I^-:=\{i:\sigma_i(\blambda_k)\to-\infty\}$. As $k\to\infty$:
\begin{eqnarray*}
    u_i\,\sigma_i(\blambda_k)-\varepsilon\,
\exp(\sigma_i(\blambda_k)/\varepsilon-1)&\to&-\infty
\mbox{ if $i\in I^+$}\\
\underbrace{u_i}_{>0}\,\sigma_i(\blambda_k)-\varepsilon\,
\exp(\sigma_i(\blambda_k)/\varepsilon-1)&\to&-\infty
\mbox{ if $i\in I^-$.}
\end{eqnarray*}
This shows that $G_\varepsilon\to-\infty$ as $k\to\infty$, and so $-G_\varepsilon$ is coercive.
\end{proof}

\begin{cor}
\label{SDPcor-1}
The regularization $\P_\varepsilon$ of $\P$ has a unique optimal solution $0<\X^*(\varepsilon)\in\om$, and under the assumption of Lemma \ref{lem:SDPcoercive},
\begin{eqnarray}
    \label{SDPcor-1:1} \tau_\varepsilon&=&
        \Tr(\C\cdot\X^*(\varepsilon))\,+\, \bv\,\Tr(\X^*(\varepsilon)\cdot\log(\X^*(\varepsilon)))\\
    \label{SDPcor-1:1}
    &=&\max_{\blambda\in\R^m}G_\varepsilon(\blambda)\,=\,
    \b^T\blambda^*-\varepsilon\,\Tr(\X^*(\varepsilon))\,.
    \end{eqnarray}

 \end{cor}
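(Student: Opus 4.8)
The plan is to transcribe the proof of Corollary \ref{cor-1} into the SDP setting, with Lemma \ref{lem:G} and Lemma \ref{lem:SDPcoercive} playing the roles of Lemma \ref{lem-1} and Lemma \ref{lem:coercive}. I first dispose of the primal side. By the Proposition asserting that $\Q_\varepsilon$ admits an optimal solution, the continuous function $\X\mapsto\Tr(\C\cdot\X)+\varepsilon\,g(\X)$ attains its minimum on the compact set $\om$; since this function is strictly convex (the von Neumann entropy being the matrix analogue of the strictly convex $\sum_i\x_i\ln(\x_i)$), the minimizer $\X^*(\varepsilon)$ is unique. I do not argue its positive definiteness directly, as it will drop out for free once $\X^*(\varepsilon)$ is identified with the dual-generated matrix $\X(\blambda^*)$.

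For the dual side, Lemma \ref{lem:SDPcoercive} gives that $-G_\varepsilon$ is coercive, and Lemma \ref{lem:G} gives that $G_\varepsilon$ is strictly concave and, being the value of an inner minimization whose unique minimizer $\X(\blambda)=\exp((\mathcal{A}^*\blambda-\C)/\varepsilon-\bI)$ is explicit and interior to $\mathcal{S}^n_{++}$, differentiable. Hence $G_\varepsilon$ attains its maximum at a unique $\blambda^*\in\R^m$ with $\nabla G_\varepsilon(\blambda^*)=0$. I would compute the gradient by the envelope (Danskin) theorem: differentiating $\b^T\blambda+\inf_\X\{\Tr(\X\cdot(\C-\mathcal{A}^*\blambda))+\varepsilon\,\Tr(\X\ln(\X))\}$ and using that the inner minimizer is $\X(\blambda)$ yields $\nabla G_\varepsilon(\blambda)=\b-\mathcal{A}(\X(\blambda))$. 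The stationarity condition therefore reads $\mathcal{A}(\X(\blambda^*))=\b$, so that $\X(\blambda^*)\in\om$, and $\X(\blambda^*)\succ0$ automatically since it is a matrix exponential.

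It then remains to close the duality gap. Weak duality, noted after \eqref{def-G-sdp}, gives $G_\varepsilon(\blambda)\leq\rho_\varepsilon$ for all $\blambda$. Evaluating at $\blambda^*$ and using both $\mathcal{A}(\X(\blambda^*))=\b$ and the fact that $\X(\blambda^*)$ is the inner minimizer, one gets $G_\varepsilon(\blambda^*)=\Tr(\C\cdot\X(\blambda^*))+\varepsilon\,g(\X(\blambda^*))$, which is exactly the primal objective of $\Q_\varepsilon$ at the feasible point $\X(\blambda^*)\in\om$, hence is $\geq\rho_\varepsilon$. The two inequalities force $G_\varepsilon(\blambda^*)=\rho_\varepsilon$ and show $\X(\blambda^*)$ to be primal-optimal; by the uniqueness already established, $\X(\blambda^*)=\X^*(\varepsilon)$, which in particular yields $\X^*(\varepsilon)\succ0$. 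Substituting the explicit value from Lemma \ref{lem:G} then gives $\rho_\varepsilon=G_\varepsilon(\blambda^*)=\b^T\blambda^*-\varepsilon\,\Tr(\X^*(\varepsilon))$, the claimed closed form.

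The step I expect to be the main obstacle is the differentiability of $G_\varepsilon$ and the gradient computation in the matrix setting: unlike the scalar identity $\nabla_\x\mathcal{L}_\varepsilon=\c-\A^T\blambda+\varepsilon(\ln(\x)+\1)$ implicit in Lemma \ref{lem-1}, here one must differentiate $\Tr(\X\ln(\X))$ through the matrix logarithm and justify the envelope-theorem interchange. This is clean \emph{precisely because} the inner minimizer $\X(\blambda)$ is interior to the cone $\mathcal{S}^n_{++}$ (no active boundary), so the stationarity of the smooth inner objective transfers to a valid gradient formula for $G_\varepsilon$; once this is secured, the remaining duality argument is formally identical to the LP case.
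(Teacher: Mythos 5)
Your proof is correct and takes essentially the same approach as the paper: the paper's entire proof of Corollary \ref{SDPcor-1} is the single remark that it ``follows similar arguments as the linear programming case,'' i.e., the proof of Corollary \ref{cor-1}, and your argument is precisely that transcription (primal existence and uniqueness via compactness and strict convexity, dual attainment via Lemma \ref{lem:G} and Lemma \ref{lem:SDPcoercive}, stationarity $\nabla G_\varepsilon(\blambda^*)=0$ giving $\mathcal{A}(\X(\blambda^*))=\b$, and weak duality closing the gap to identify $\X(\blambda^*)=\X^*(\varepsilon)$). Your explicit treatment of the envelope-theorem gradient computation supplies a detail the paper leaves implicit, but it is not a departure from the paper's route.
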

 The proof follows similar arguments as the linear programming case.
 
\section{Asymptotic Analysis and Computational Experiments}

In this section, we analyze the asymptotic behavior of the solution to the entropy-regularized linear program as $\varepsilon\downarrow 0$, and provide some computational experiments that illustrate the approach.

\subsection{Asymptotic Behavior of the Regularized Solution}
We consider the asymptotic behavior of the solution $\x^*(\varepsilon)$ to the entropy-regularized linear program as the regularization parameter $\varepsilon \downarrow 0$. 
\begin{lem}
    For every $\varepsilon>0$, let $\x^*(\varepsilon)$ be the unique minimizer of \eqref{def-LP-epsilon}. Then $\x^*(\varepsilon)$ converges to the unique minimizer $\x^*$ of the LP \eqref{def-LP} which minimizes the Shannon entropy among all minimizers of \eqref{def-LP}.
\end{lem}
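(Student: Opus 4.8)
The plan is to treat this as a Tikhonov-type selection result: the vanishing regularizer $\varepsilon f$ singles out, in the limit, the LP-optimizer of least Shannon entropy. Write $S:=\{\x\in\om:\c^T\x=\tau\}$ for the set of LP-minimizers, which is a nonempty, compact, convex face of $\om$. The first thing I would record is that the target point $\x^*$ is well defined: since $u\mapsto u\ln u$ is strictly convex on $[0,\infty)$, the map $f$ is strictly convex on its domain, so it attains its minimum over the compact convex set $S$ at a \emph{unique} point $\x^*$. Existence and uniqueness of $\x^*(\varepsilon)$ itself I would take from Corollary~\ref{cor-1}.

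The heart of the argument is a two-sided comparison extracted from optimality of $\x^*(\varepsilon)$ for $\P_\varepsilon$. Testing the inequality $\c^T\x^*(\varepsilon)+\varepsilon f(\x^*(\varepsilon))\leq\c^T\y+\varepsilon f(\y)$ against the admissible choice $\y=\x^*\in S$ gives
\[\c^T\x^*(\varepsilon)+\varepsilon f(\x^*(\varepsilon))\,\leq\,\tau+\varepsilon f(\x^*).\]
Because $\x^*(\varepsilon)\in\om$ forces $\c^T\x^*(\varepsilon)\geq\tau$, subtracting and dividing by $\varepsilon>0$ yields the crucial entropy bound $f(\x^*(\varepsilon))\leq f(\x^*)$; keeping instead the cost term and using that the continuous $f$ is bounded below on the compact $\om$ gives $\tau\leq\c^T\x^*(\varepsilon)\leq\tau+\varepsilon\,(f(\x^*)-\min_{\x\in\om}f(\x))$, so that $\c^T\x^*(\varepsilon)\to\tau$ as $\varepsilon\downarrow0$.

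I would then pass to the limit by compactness. Fix any sequence $\varepsilon_k\downarrow0$; the points $\x^*(\varepsilon_k)$ lie in the compact set $\om$, so a subsequence converges to some $\bar{\x}\in\om$. Continuity of $\c^T\!\cdot$ together with the cost estimate gives $\c^T\bar{\x}=\tau$, i.e.\ $\bar{\x}\in S$, while continuity of $f$ on $\om$ and the bound $f(\x^*(\varepsilon_k))\leq f(\x^*)$ give $f(\bar{\x})\leq f(\x^*)$. Since $\bar{\x}\in S$ and $\x^*$ is the \emph{unique} minimizer of $f$ over $S$, this forces $f(\bar{\x})=f(\x^*)$ and hence $\bar{\x}=\x^*$. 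As every convergent subsequence has the same limit $\x^*$ and $\om$ is compact, the whole family converges: $\x^*(\varepsilon)\to\x^*$.

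The step requiring the most care is the regularity that makes this limit passage legitimate, namely the continuity of $f$ up to the boundary of $\om$ under the convention $0\ln0=0$ already fixed in the excerpt. This matters precisely because the regularized solutions satisfy $\x^*(\varepsilon)>0$ and are interior, whereas both $\bar{\x}$ and the target $\x^*$ typically lie on the boundary (an LP-optimal vertex has $d-m$ vanishing coordinates); boundary continuity is exactly what allows me to carry $f(\x^*(\varepsilon_k))\leq f(\x^*)$ to the limit. Everything else — the comparison inequalities and the two uniqueness facts from strict convexity — is routine given compactness of $\om$.
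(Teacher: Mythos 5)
Your proof is correct and follows essentially the same route as the paper's: a two-sided comparison from optimality of $\x^*(\varepsilon)$ against an LP-minimizer, division by $\varepsilon$ to extract the entropy bound $f(\x^*(\varepsilon))\leq f(\x^*)$, passage to the limit via compactness of $\om$ and continuity of $f$ (with $0\ln 0=0$), and strict convexity of $f$ to identify all subsequential limits. The only cosmetic difference is that you fix the target $\x^*$ a priori as the unique entropy-minimizer over the optimal face $S$ and compare against it, whereas the paper compares against an arbitrary minimizer $\x$ and deduces afterwards that the subsequential limit minimizes the entropy over all LP-minimizers; this makes your write-up slightly tighter on the order of quantification but does not change the argument.
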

\begin{proof}
Consider a sequence $\varepsilon_k \downarrow 0$, and its associated sequence of unique minimizers $\x^*(\varepsilon_k)$ of $\P_{\varepsilon_k}$ as defined in \eqref{def-LP-epsilon}, and let $\x$ be any minimizer of $\P$ as defined in \eqref{def-LP}. Since $\om$ is compact,  $\x^*(\varepsilon_k) \to \x^*$ (up to a subsequence). Then we have 
 \begin{eqnarray*}
0\le\,\c^T\x^*(\varepsilon_k) - \c^T\x\le \varepsilon_k\,\big(\sum_{i=1}^d\x_i\ln \x_i-\sum_{i=1}^d\x_i^*(\varepsilon_k)\ln \x_i^*(\varepsilon_k) \big) 
 \end{eqnarray*}
where the first inequality follows from the fact that $\x$ is the minimizer of \eqref{def-LP} and the second inequality follows from the fact that $\x^*(\varepsilon)$ is the minimizer of \eqref{def-LP-epsilon}. As $\varepsilon_k \downarrow 0$, we obtain $\c^T\x^* = \c^T \x$. Moreover, dividing $\varepsilon_k$ on the right-hand-side inequality,
one obtains $\sum_{i=1}^d\x_i\ln \x_i \ge \sum_{i=1}^d\x_i^*(\varepsilon_k)\ln \x_i^*(\varepsilon_k)$. Thus taking the limit as $k\to\infty$, one obtains $\sum_{i=1}^d\x_i\ln \x_i \ge \sum_{i=1}^d\x_i^*\ln \x_i^*$. That is, $\x^*$ minimizes the Shannon entropy among all minimizers of \eqref{def-LP}. Since entropy is strictly convex, the whole sequence $\x^*(\varepsilon_k)$ converges to the one that minimizes  Shannon entropy (and so is unique).
\end{proof}

\begin{rem}
    For semidefinite programs, a similar result holds for the minimizers $\X(\blambda)$ of problem $\P_\varepsilon$, as defined in \eqref{SDPreg}, with a similar proof.
\end{rem} 

\subsection{Computational Experiments}
We next provide some numerical results to illustrate the method for solving entropy-regularization of linear programs. To showcase the proposed method,
we have considered two different type of examples.

\subsection*{Large-Scale Linear Program}
Our first example consists of a non trivial LP with $10 000$ variables and $50$ constraints. The matrix $\A$ and the vector $\b$ were generated randomly while ensuring feasibility by setting $\b = \A \x^*$ for a randomly generated positive vector $\x^*$ (so that Slater's condition holds). The cost vector $\c$ was also generated randomly. 
To obtain $\blambda^*$, the dual function $G_{\varepsilon}(\blambda)$ was maximized using MATLAB’s \texttt{fminunc} function based on a quasi-Newton method.

With $\varepsilon:=0.01$, we observed that solving 20 random instances of the regularized LP problem required only an average of 15 iterations to converge, with the norm of the gradient of $\G_\varepsilon$ at optimality being on the order of $1e-4$; recall that the gradient of $\G_\varepsilon$ is the feasibility constraint in \eqref{def-LP}.

\subsection*{Discrete Optimal Transport Problem}
As already mentioned, for optimal transport problems (OT)
(a very specific type of LPs), the celebrated Sinkhorn algorithm (also based on a Shannon-entropy regularization) is widely used; see e.g. \cite{Cuturi}. So 
we have also compared our algorithm with Sinkhorn algorithm
on a toy OT example
with cost matrix
\begin{equation*}
C = \begin{pmatrix}
4 & 1 \\
2 & 3
\end{pmatrix}\,,
\end{equation*}
and with source and target distributions (marginals) :
\begin{equation} \label{marginals}
p = \begin{pmatrix}
0.5 \\
0.5
\end{pmatrix}
\quad \text{and} \quad
q = \begin{pmatrix}
0.6 \\
0.4
\end{pmatrix}.
\end{equation}
A regularization parameter of $\varepsilon = 0.01$ was applied. The dual problem was solved using again \texttt{fminunc}, to yield the optimal dual solution and the corresponding regularized cost. The optimal value $G_\varepsilon(\blambda^*)$ of the regularized LP for the OT problem was \textbf{1.7906} with $\Vert\nabla G_\varepsilon(\blambda^*)\Vert \approx 1e-6$, while the optimal value obtained via the Sinkhorn algorithm\footnote{We refer to the MATLAB implementation by Marco Cuturi,  available at \url{https://github.com/marcocuturi/SinkhornTransport}}
was \textbf{1.7906}. 
\begin{rem}
    The Sinkhorn algorithm is an iterative matrix scaling algorithm used to find a bistochastic matrix by iteratively normalizing its rows and columns. A key property of the Sinkhorn algorithm is the factorization of the exponential map, which characterizes the primal solution as 
\[
\exp(\phi + \psi - C).
\]
where $\phi,\psi$ are lagrange multipliers corresponding to the marginal constraints in the Sinkhorn algorithm. An example of such marginals is provided in \eqref{marginals}.
This factorization enables efficient updates and guarantees geometric convergence at a rate of \(\mathcal{O}(e^{-k/\varepsilon})\), where \(k\) is the iteration count and \(\varepsilon>0\) is the entropic regularization parameter. 

However, such a factorization is not possible in \eqref{lem-1:3}, because the matrix \( \A^T \blambda \) contains mixed terms that prevent the exponential matrix from being factorized. This lack of separability is an obstruction
to envision Sinkhorn-like algorithms, as iterative matrix scaling techniques rely on independent row and column updates, which are not possible in this setting.

\end{rem}
\subsection*{Semidefinite Programming Example}
We also implemented our method for solving problem $\Q_\varepsilon$ defined in \eqref{SDPreg} by maximizing the dual function $G_\varepsilon$  given in \eqref{lem:G-formula}. We solved a problem with $\mathcal{A}: \mathcal{S}^{100} \to \R^{20}$, where the individual matrices were randomly generated symmetric matrices. The problem was solved by again using MATLAB’s \texttt{fminunc} function. In 20 random experiments, we observed that the problem converged on average in 15 iterations.

We wish to emphasize that the numerical results provided in this section are meant to showcase the proposed method. While we have successfully implemented the method for medium to relatively large LPs ({\bf with matrices 
$\A$ of dimension around $(50\times 10 000)$}), for semidefinite programming problems we have encountered numerical issues with large dimensions and very small regularization parameter $\varepsilon$, very likely
due to our use of basic Matlab's matrix exponential routines which are prone to numerical instabilities. 
Further investigation beyond the scope of the present paper,
is required to 
(i) develop robust and faster algorithms to implement the proposed method, and (ii) possibly provide rates of convergence for fixed $\varepsilon$. In addition, a detailed comparison of the proposed method with existing methods is a
topic further investigation.

\begin{rem}
    Table \ref{algo_comparison} below draws a comparison between the proposed method and the Sinkhorn algorithm, similar to how the Simplex algorithm is compared to the Hungarian algorithm for solving linear programming and optimal transport problems, respectively.
\begin{table}[h!]
\centering
\renewcommand{\arraystretch}{2} 
\begin{tabular}{|c|c|c|}
\hline
\textbf{} & \textbf{LP} & \textbf{OT} \\
\hline
\textbf{Classical Algorithms} & Simplex  & Hungarian \\
\hline
\textbf{Entropy-regularization} & Proposed method & Sinkhorn \\
\hline
\end{tabular}
\caption{(LP) and optimal transport (OT) problems,
with their entropy-regularization.}
\label{algo_comparison}
\end{table}
\end{rem}

\section*{Acknowlegements}
This work was done when the two authors were invited at the IPAM institute in UCLA during the long program \emph{Non Commutative Optimal Transport} (Spring 2025). It was inspired by the use of Shannon entropy in (static) OT and benefited from various discussions with participants in that program. The authors gratefully acknowledge financial support from IPAM.

 \end{document}